\documentclass{article}
\usepackage{graphicx}
\usepackage{lineno}
\usepackage[english]{babel}
\usepackage{indentfirst}
\usepackage{amsmath}
\usepackage{amsthm}

\def\qed{\hfill \rule{2.5mm}{2.5mm}}
\newtheorem{thm}{Theorem}[section]
\newtheorem{lem}[thm]{Lemma}

\newcommand{\D}{\displaystyle}
\newcommand{\DF}[2]{\frac{\D#1}{\D#2}}
\newcommand{\lf}{\left\lfloor}
\newcommand{\rf}{\right\rfloor}
\newcommand{\M}{\lf\DF{M+2d-1}{2d}\rf}

\textwidth 165mm \textheight 235mm \topmargin 0cm \headheight -0.5cm
\hoffset -3cm \oddsidemargin 0.96 in

\large\normalsize

\title{List ($d$,1)-total labelling of graphs embedded in surfaces\thanks{This work was supported by IIFSDU(2009hw001), NNSF(61070230, 11026184, 10901097) and
RFDP(200804220001, 20100131120017) and SRF for ROCS.}}
\author{Yong Yu, Xin Zhang, Guizhen Liu \thanks{Corresponding author.\quad yuyong6834@yahoo.com.cn, gzliu@sdu.edu.cn}
\\School of Mathematics, Shandong University, Jinan, 250100, China}
\date{}

\begin{document}
\maketitle

\begin{abstract}
The ($d$,1)-total labelling of graphs was introduced by Havet and
Yu. In this paper, we consider the list version of ($d$,1)-total
labelling of graphs. Let $G$ be a graph embedded in a surface with
Euler characteristic $\varepsilon$ whose maximum degree $\Delta(G)$
is sufficiently large. We prove that the ($d$,1)-total choosability
$C_{d,1}^T(G)$ of $G$ is at most $\Delta(G)+2d$.

\noindent {\bf Keywords:} ($d$,1)-total labelling; list
($d$,1)-total labelling; ($d$,1)-total choosability; graphs

\noindent \textbf{MSC:} 05C15
\end{abstract}

\section{Introduction}

In this paper, graph $G$ is a simple connected graph with a finite
vertex set $V(G)$ and a finite edge set $E(G)$. If $X$ is a set, we
usually denote the cardinality of $X$ by $|X|$. Denote the set of
vertices adjacent to $v$ by $N(v)$. The degree of a vertex $v$ in
$G$, denoted by $d_G(v)$, is the number of edges incident with $v$.
We sometimes write $V, E, d(v), \Delta, \delta$ instead of $V(G),
E(G), d_G(v), \Delta(G), \delta(G)$, respectively. Let $G$ be a
plane graph. We always denote by $F(G)$ the face set of $G$. The
degree of a face $f$, denoted by $d(f)$, is the number of edges
incident with it, where cut edge is counted twice. A $k$-, $k^+$-
and $k^-$-vertex ( or face ) in graph $G$ is a vertex ( or face ) of
degree $k$, at least $k$ and at most $k$, respectively.

The ($d$,1)-total labelling of graphs was introduced by Havet and Yu
\cite{4}. A \emph{$k$-($d$,1)-total labelling} of a graph $G$ is a
function $c$ from $V(G)\cup E(G)$ to the color set
$\{0,1,\cdots,k\}$ such that $c(u)\neq c(v)$ if $uv\in E(G)$,
$c(e)\neq c(e')$ if $e$ and $e'$ are two adjacent edges, and
$|c(u)-c(e)|\geq d$ if vertex $u$ is incident to the edge $e$. The
minimum $k$ such that $G$ has a $k$-($d$,1)-total labelling is
called the \emph{($d$,1)-total labelling number} and denoted by
$\lambda_d^T(G)$. Readers are referred to \cite{1,3,5,6,7} for
further research.

Suppose that $L(x)$ is a list of colors available to choose for each
element $x\in V(G)\cup E(G)$. If $G$ has a ($d$,1)-total labelling
$c$ such that $c(x)\in L(x)$ for all $x\in V(G)\cup E(G)$, then we
say that $c$ is an $L$-($d$,1)-total labelling of $G$, and $G$ is
$L$-($d$,1)-total labelable (sometimes we also say $G$ is list
($d$,1)-total labelable). Furthermore, if $G$ is $L$-($d$,1)-total
labelable for any $L$ with $|L(x)|=k$ for each $x\in V(G)\cup E(G)$,
we say that $G$ is $k$-($d$,1)-total choosable. The ($d$,1)-total
choosability, denoted by $C_{d,1}^T(G)$, is the minimum $k$ such
that $G$ is $k$-($d$,1)-total choosable. Actually, when $d=1$, the
list (1,1)-total labelling is the well-known list total coloring of
graphs. It is known that for list version of total colorings there
is a list total coloring conjecture (LTCC). Therefore, it is natural
to conjecture that $C_{d,1}^T(G)=\lambda_d^T(G)+1$.
Unfortunately, counterexamples that
$C_{d,1}^T(G)$ is strictly greater than $\lambda_d^T(G)+1$ can be found in \cite{9}.
Although we can not present a conjecture like LTCC, we conjecture that
$C_{d,1}^T(G)\leq \Delta+2d$ for any graph $G$. In \cite{9}, we
studied the list ($d$,1)-total labelling of special graphs such as
paths, trees, stars and outerplanar graphs which lend positive
support to our conjecture.

In this paper, we prove that, for graphs which can be embedded in a
surface with Euler characteristic $\varepsilon$, the conjecture is
still true when the maximum degree is sufficiently large. Our main
results are the following:

\begin{thm}
Let $G$ be a graph embedded in a surface of Euler characteristic
$\varepsilon\leq0$ and $\Delta(G)\geq\DF{d}{2d-1}
\left(10d-8+\sqrt{(10d-2)^2-24(2d-1)\varepsilon}\right)+1$, where
$d\geq2$. Then $C_{d,1}^T(G)\leq\Delta(G)+2d$.
\end{thm}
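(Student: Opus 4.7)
The proof will proceed by the standard method of reducible configurations combined with a discharging argument, applied to a minimum counterexample. Suppose $G$ is embedded in a surface of Euler characteristic $\varepsilon\le 0$, $\Delta(G)$ satisfies the stated bound, yet $G$ is not $L$-$(d,1)$-total labelable for some assignment $L$ with $|L(x)|=\Delta(G)+2d$ for every element $x\in V(G)\cup E(G)$. Choose such a $G$ minimizing $|V(G)|+|E(G)|$; the aim is to derive a contradiction.

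The first phase is to prove a family of structural (reducibility) lemmas asserting that certain local configurations cannot occur in $G$. Each such lemma follows the same template: delete a carefully chosen element (typically a low-degree vertex or a pendant edge), invoke minimality to obtain an $L$-$(d,1)$-total labelling of the resulting smaller embedded graph, then re-insert the deleted element and count colours blocked by already-labelled incident material. For an edge $e=uv$ the blocked colours come from at most $d(u)+d(v)-2$ adjacent edges together with the two $(d,1)$-separation intervals of length $2d-1$ around $c(u)$ and $c(v)$; for a vertex $v$ they come from its $d(v)$ neighbours together with the $2d-1$ colours within distance $d-1$ of each incident edge's colour, giving at most $2d\cdot d(v)$ forbidden values. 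Whenever these counts fall strictly below $\Delta+2d$, the labelling extends, contradicting minimality. This rules out, for instance, low-degree vertices adjacent to too many other low-degree vertices and short faces incident to too many low-degree vertices, and forces a lower bound on $\delta(G)$ of roughly $\Delta/(2d)$.

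The second phase is a discharging argument. Using Euler's formula $|V|-|E|+|F|=\varepsilon$ together with the initial charge $\mu(x)=d(x)-4$ assigned to every vertex and face, the total charge equals $-4\varepsilon$. I would design rules that transfer charge from high-degree vertices and long faces toward low-degree vertices and short faces, using the reducibility lemmas of Phase 1 to guarantee each deficient element receives enough. Summing final charges and comparing against the identity $\sum_x\mu(x)=-4\varepsilon$ will produce a quadratic inequality in $\Delta$; rearrangement shows the inequality can hold only when $\Delta$ is at most the larger root of that quadratic, and the threshold in the theorem is precisely the complementary condition.

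The main obstacle will be the fine calibration of Phase 1. Because $|L(x)|=\Delta+2d$ leaves only a single unit of slack beyond what the greedy extension bound affords, the reducible configurations must be pushed to the sharp edge of what the forbidden-colour count can handle, particularly regarding the interaction between the edge-coloring constraint ($d(u)+d(v)-2$ forbidden colours on an edge) and the $(d,1)$-separation constraint ($2(2d-1)$ forbidden colours from incident vertex labels). Matching these sharp reducibility lemmas to a discharging scheme that yields precisely the coefficients $10d-8$ and $24(2d-1)$ under the square root --- rather than weaker constants --- is the delicate technical heart of the argument and will probably require treating low-degree vertices, short faces, and their incidences as separate discharging actors rather than lumping them together.
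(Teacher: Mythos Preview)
Your outline captures the overall architecture --- reducibility plus discharging on a minimal counterexample --- but there are three concrete gaps between your sketch and a proof that actually reaches the stated threshold.

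\textbf{(1) The minimality set-up is unstable as written.} You fix lists of size $\Delta(G)+2d$ and take $G$ minimal. But when you delete an edge or vertex and appeal to minimality, the subgraph $G'$ may have $\Delta(G')<\Delta(G)$; then ``$G'$ is not a counterexample to the theorem'' does \emph{not} tell you that $G'$ is labelable from lists of size $\Delta(G)+2d$ --- it might simply fail the hypothesis on $\Delta$. The paper handles this by proving a technical strengthening: fix an integer $M$ satisfying the bound, assume $\Delta(G)\le M$, and show $C_{d,1}^T(G)\le M+2d$. Now every subgraph still satisfies $\Delta\le M$, and minimality works.

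\textbf{(2) Greedy extension alone is not enough.} Your Phase~1 counts only directly forbidden colours and will give you the two basic lemmas ($d(u)+d(v)\ge M-2d+4$ always, and $d(u)+d(v)\ge M+3$ when one end has degree at most $\lfloor(M+2d-1)/(2d)\rfloor$). But these do not suffice to force the quadratic with coefficients $10d-8$ and $24(2d-1)$. The missing non-greedy tool is the Borodin--Kostochka--Woodall result that a bipartite graph is edge $f$-choosable with $f(uv)=\max\{d(u),d(v)\}$. Using it, one shows $G$ contains no \emph{$k$-alternator} (a bipartite subgraph $B(X,Y)$ with $d_B(x)=d_G(x)\le k$ on $X$ and $d_B(y)\ge d_G(y)+k-M-1$ on $Y$), and from this one extracts a \emph{$k$-master} matching: every low-degree vertex is matched to a high-degree neighbour, with each high-degree vertex used at most $k-2$ times. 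This is the mechanism by which high-degree vertices pay for low-degree ones.

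\textbf{(3) The discharging is different in kind.} The paper assigns $w(v)=d(v)-6$ to vertices only (faces are not charged; their contribution is absorbed into the inequality $\sum w(v)\le -6\varepsilon$), and the contradiction is \emph{not} obtained by making every final charge nonnegative. Instead, one shows (via the absence of alternators) that $G$ has at least $M-\lfloor(M+2d-1)/(2d)\rfloor+3$ vertices of degree exceeding $\lfloor(M+2d-1)/(2d)\rfloor$, each retaining charge at least $\lfloor(M+2d-1)/(2d)\rfloor-5$ after the master transfers. Multiplying these two quantities gives the quadratic in $(M-1)/(2d)$ whose positivity contradicts $\sum w(v)\le -6\varepsilon$ precisely when $M$ meets the stated bound. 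Your proposed $d(x)-4$ scheme on vertices and faces would force you to discharge $3$-faces as well, and it is not clear how you would recover the same constants from that route.
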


\begin{thm}
Let $G$ be a graph embedded in a surface of Euler characteristic
$\varepsilon>0$. If $\Delta(G)\geq 5d+2$ where $d\geq2$, then
$C_{d,1}^T(G)\leq \Delta(G)+2d$.
\end{thm}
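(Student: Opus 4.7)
I would argue by contradiction, taking $G$ to be an edge-minimum counterexample, so that every proper subgraph $H$ of $G$ with $\Delta(H)\leq\Delta(G)$ is $(\Delta(G)+2d)$-$(d,1)$-total choosable. The proof will follow the standard two-step pattern: first I identify a list of reducible configurations that cannot appear in $G$, and then I run a discharging argument on the embedding of $G$ in the chosen surface to force at least one such configuration to occur, yielding a contradiction.

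The backbone of the extremal analysis is a counting lemma for edges. Consider an edge $uv$; by minimality, $G-uv$ has an $L$-$(d,1)$-total labelling, and to extend it to $uv$ we must avoid at most $(d(u)-1)+(d(v)-1)$ colours coming from adjacent edges and $2(2d-1)$ more from the ``close ranges'' of $c(u)$ and $c(v)$, for a total of $d(u)+d(v)+4d-4$. Since every list has size $\Delta+2d$, the edge can always be coloured whenever $d(u)+d(v)\leq \Delta-2d+3$, and hence no such edge appears in $G$. Analogous counts applied to a low-degree vertex (together with some of its incident edges) yield additional reducibility statements; I expect at least $\delta(G)\geq 2$, tight constraints on the degrees of the neighbours of $2$-, $3$- and $4$-vertices, and a statement ruling out specific configurations of small-degree vertices incident to $3$-faces.

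The discharging uses Euler's formula $|V|-|E|+|F|=\varepsilon\in\{1,2\}$ with initial charges $\mu(v)=d(v)-4$ and $\mu(f)=d(f)-4$, whose total is $-4\varepsilon\in\{-4,-8\}$. The hypothesis $\Delta\geq 5d+2$ enters through the edge-reducibility bound: every $k$-vertex with $k$ small has all its neighbours of degree at least $\Delta-2d+3-k$, which is comfortably larger than $3d$. I would then transfer a carefully chosen amount of charge from high-degree vertices to their small-degree neighbours and from large faces to incident $3$-faces, and verify by a case analysis on the degree of each element that the reducible configurations preclude negative final charge anywhere, contradicting the negative total.

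The main difficulty will be calibrating the rules so that the mild bound $\Delta\geq 5d+2$ (rather than the much larger bound used in Theorem~1.1) actually suffices. The tight configuration is likely a $3$-face whose three vertices all have ``medium'' degree, since such a face simultaneously demands charge for itself and for each of its incident vertices. I anticipate that closing this case will require either an additional reducible configuration excluding such faces or a two-way transfer between face and vertex charges that explicitly exploits the edge bound $d(u)+d(v)\geq \Delta-2d+4$ for every edge of the face.
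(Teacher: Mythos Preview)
Your outline has the right architecture (minimal counterexample, edge-degree reducibility, discharging with charges $d(\cdot)-4$), and your edge lemma $d(u)+d(v)\geq\Delta-2d+4$ matches the paper's property~(C2). But two ingredients the paper relies on are missing from your plan, and without them the argument does not close at the threshold $\Delta\geq 5d+2$.

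First, the paper does \emph{not} prove Theorem~1.2 directly; it proves the technical strengthening Theorem~1.4 with an auxiliary parameter $M\geq\Delta(G)$, precisely so that when one passes to a proper subgraph $H$ with $\Delta(H)<\Delta(G)$ the induction still applies with the \emph{same} list size $M+2d$. Your sentence ``every proper subgraph $H$ with $\Delta(H)\leq\Delta(G)$ is $(\Delta(G)+2d)$-$(d,1)$-total choosable'' is exactly what the $M$-device buys, but it is not a consequence of edge-minimality for Theorem~1.2 as stated; you must either introduce $M$ or fix a bad list assignment and take a subgraph-minimal non-labelable configuration.

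Second, and more substantively, your discharging scheme (high-degree vertices pay low-degree neighbours; large faces pay $3$-faces) will not reach $5d+2$. A $4$-face has zero charge, so face-to-face transfer contributes nothing. And if a $\Delta$-vertex must pay each of its $3$-neighbours, it can have up to $\Delta$ of them and must also feed its incident $3$-faces; the arithmetic then forces roughly $\Delta\geq 24$ rather than $12$. The paper avoids this via the Borodin--Kostochka--Woodall $k$-alternator\,/\,$k$-master machinery (Lemmas~2.4--2.6): one shows that $G$ contains no $k$-alternator, whence each $3$-vertex can be matched to a single ``$3$-master'' of degree $\Delta$, and each $\Delta$-vertex is the master of at most one $3$-vertex. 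The discharging rule is then that a $3$-vertex receives $1$ only from its master, so a $\Delta$-vertex pays at most $1$ to $3$-vertices plus $\tfrac12$ per incident $3$-face, giving final charge $\geq \tfrac{\Delta}{2}-5>0$ once $\Delta\geq 5d+2\geq 12$. This matching argument, together with the sharper reducibility (C3) (if $\min\{d(u),d(v)\}\leq\lfloor(M+2d-1)/(2d)\rfloor$ then $d(u)+d(v)\geq M+3$, proved by uncolouring the small endpoint), also yields $\delta(G)\geq 3$ and forces every $3$-vertex to have only $\Delta$-neighbours; your expected ``$\delta\geq 2$'' is too weak. The ``tight'' $3$-face you worry about is in fact easy once all three vertices have degree $\geq 6$; the genuinely delicate point is controlling how many small vertices a single big vertex must subsidise, and that is exactly what the alternator lemma handles.
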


We prove two conclusions which are slightly stronger than the
theorems above as follows.

\begin{thm}
Let $G$ be a graph embedded in a surface of Euler characteristic
$\varepsilon\leq0$ and let
$M\geq\DF{d}{2d-1}\left(10d-8+\sqrt{(10d-2)^2-24(2d-1)\varepsilon}\right)+1$
where $d\geq2$. If $\Delta(G)\leq M$, then $C_{d,1}^T(G)\leq M+2d$.
In particular, $C_{d,1}^T(G)\leq \Delta(G)+2d$ if $\Delta(G)=M$.
\end{thm}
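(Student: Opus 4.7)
The plan is to establish Theorem 1.3 by contradiction via the discharging method applied to a minimum counterexample. Suppose $G$ is a counterexample minimizing $|V(G)|+|E(G)|$: it is embedded in a surface of Euler characteristic $\varepsilon\leq 0$, satisfies $\Delta(G)\leq M$ for the prescribed $M$, yet admits a list assignment $L$ with $|L(x)|=M+2d$ for every $x\in V(G)\cup E(G)$ for which no $L$-$(d,1)$-total labelling of $G$ exists. The goal is to derive a contradiction by showing that $G$ cannot simultaneously satisfy the structural constraints coming from the embedding (via Euler's formula) and avoid the reducible configurations forced by its minimality.

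First I would prove a collection of reducible configurations. The standard strategy is to take a candidate configuration containing a low-degree vertex $u$ (or a low-degree vertex adjacent to several medium- or high-degree vertices), delete or contract a carefully chosen element, apply minimality of $G$ to obtain an $L$-$(d,1)$-total labelling of the reduced graph, and then try to extend it to all of $G$. For each still-unlabelled element $x$ the extension counts, inside $L(x)$, the colors forbidden either by already labelled neighbors or by the $d$-separation requirement between incident vertex--edge pairs. Since $|L(x)|=M+2d$ and the forbidden counts are bounded in terms of $d$ and $\Delta(G)\leq M$, each extension reduces to an inequality of the form ``available colors exceed forbidden colors''. The configurations to be ruled out will at least include $1$-vertices, $2$-vertices with restricted neighborhoods, and ``$k$-vertex adjacent to $k'$-vertex'' patterns for small $k,k'$; more delicate configurations will probably be needed to reach the sharp constant in $M$.

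Once the reducible configurations are fixed, I would invoke Euler's formula $|V(G)|-|E(G)|+|F(G)|=\varepsilon$ and, using $2|E(G)|=\sum_{v}d(v)=\sum_{f}d(f)$, rewrite it as $\sum_{v\in V}(d(v)-4)+\sum_{f\in F}(d(f)-4)=-4\varepsilon$. Assigning initial charges $c(v)=d(v)-4$ and $c(f)=d(f)-4$ produces a total charge $-4\varepsilon$, which is nonpositive when $\varepsilon\leq 0$. I would then design discharging rules moving charge from high-degree vertices and $4^+$-faces to the (now restricted) low-degree vertices and $3$-faces. Using the absence of the reducible configurations, every vertex and every face should end with a strictly positive final charge, contradicting the nonpositive total.

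The main obstacle will be calibrating the set of reducible configurations together with the discharging rates so that the resulting threshold on $M$ is exactly $\frac{d}{2d-1}\bigl(10d-8+\sqrt{(10d-2)^{2}-24(2d-1)\varepsilon}\bigr)+1$. The appearance of a square root in $\varepsilon$ is characteristic of a quadratic inequality produced by balancing two competing charge transfers: typically one parameter controls how far ``bad'' small-degree vertices can propagate charge demand, and another sets the threshold at which a vertex is considered ``large'' enough to donate charge. Optimizing this parameter to minimize the required $M$ yields the quadratic whose positive root produces the stated bound. I expect most of the technical effort to lie in verifying the extension step in each reducible configuration and in checking positivity of the final charges case by case; the final clause ``in particular, $C_{d,1}^{T}(G)\leq\Delta(G)+2d$ if $\Delta(G)=M$'' follows immediately by substituting $M=\Delta(G)$ in the conclusion.
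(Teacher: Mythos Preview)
Your outline contains a sign error that undermines the whole strategy for $\varepsilon\leq 0$. With the charge scheme $c(v)=d(v)-4$, $c(f)=d(f)-4$, the total charge is $-4\varepsilon$, which is \emph{nonnegative} when $\varepsilon\leq 0$, not nonpositive. Showing that all final charges are positive therefore yields no contradiction (for $\varepsilon<0$ a strictly positive total is exactly what one expects). The ``make everything nonnegative'' discharging you describe only produces a contradiction in the spherical regime $\varepsilon>0$; this is in fact the scheme the paper uses for Theorem~1.4, not Theorem~1.3.

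For Theorem~1.3 the paper proceeds differently. It puts charge $d(v)-6$ on vertices only and uses $d(f)\geq 3$ to get $\sum_{v}(d(v)-6)\leq -6\varepsilon$. The contradiction comes not from making every final charge nonnegative, but from showing that the total final charge is \emph{strictly greater than} $-6\varepsilon$. This requires a quantitative lower bound on the surplus: via a $k$-alternator argument (Lemma~2.5) together with the list-edge-colouring lemma of Borodin--Kostochka--Woodall, the paper shows that $G$ contains at least $M-\bigl\lfloor\frac{M+2d-1}{2d}\bigr\rfloor+3$ vertices of degree exceeding $\bigl\lfloor\frac{M+2d-1}{2d}\bigr\rfloor$, each carrying final charge at least $\bigl\lfloor\frac{M+2d-1}{2d}\bigr\rfloor-5$. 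Multiplying these two quantities gives a quadratic in $\frac{M-1}{2d}$, and comparing it with $-6\varepsilon$ is exactly what produces the stated threshold on $M$; this is the true source of the square root, not a balance between two transfer rates as you conjecture. Your plan also omits the $k$-master matching (Lemma~2.6) that routes charge from high-degree to low-degree vertices; generic ``small vertex adjacent to small vertex'' reductions will not recover either this structure or the exact constant.
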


\begin{thm}
Let $G$ be a graph embedded in a surface of Euler characteristic
$\varepsilon>0$ and let $M\geq 5d+2$ where $d\geq2$. If
$\Delta(G)\leq M$, then $C_{d,1}^T(G)\leq M+2d$. In particular,
$C_{d,1}^T(G)\leq \Delta(G)+2d$ if $\Delta(G)=M$.
\end{thm}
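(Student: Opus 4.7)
The plan is to argue by contradiction using the standard minimum-counterexample-plus-discharging framework. Positive Euler characteristic keeps the global charge budget small enough that the discharging closes with comparatively mild structural information.

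Suppose the theorem fails. Let $G$ be embedded in a surface of Euler characteristic $\varepsilon>0$ with $\Delta(G)\le M$ (where $M\ge 5d+2$), and let $L$ be a list assignment with $|L(x)|=M+2d$ for every $x\in V(G)\cup E(G)$ such that $G$ admits no $L$-($d$,1)-total labelling; choose $G$ with $|V(G)|+|E(G)|$ minimum subject to this.

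\textbf{Step 1 (reducibility).} I would extract structural information about $G$ by extending partial labellings. If $v\in V(G)$ has degree at most $2$, delete $v$, apply minimality to label $G-v$, and extend: each uncolored incident edge $vu_i$ has at least $M+2d-(d(u_i)-1)-(2d-1)=M-d(u_i)+2\ge 2$ free colors, and after choosing these distinctly the vertex $v$ has at least $M+2d-2-2(2d-1)=M-2d\ge 3d+2$ free colors. Hence $\delta(G)\ge 3$. For $3$-, $4$- and $5$-vertices the extension amounts to an instance of list-($d$,1)-total labelling on a star $K_{1,k}$ with reduced lists; in the spirit of the star and outerplanar results of \cite{9}, this rules out further local configurations (for example, a $3$-vertex all of whose neighbours have degree at most $M-1$, reducible by first list-edge-coloring the three mutually adjacent incident edges from lists of size $\ge 3$, then picking a color for $v$ from the remaining $M-4d\ge d+2$ choices). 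The output of this step is an explicit list $\mathcal F$ of forbidden configurations.

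\textbf{Step 2 (discharging).} Assign each vertex and each face $x$ of $G$ the initial charge $\mu(x)=d(x)-4$. By handshaking and Euler's formula,
\[
\sum_{v\in V(G)}(d(v)-4)+\sum_{f\in F(G)}(d(f)-4)\;=\;-4\varepsilon\;<\;0.
\]
I would then design local rules that transfer charge from $\Delta$-vertices and large faces toward $3$-, $4$- and $5$-vertices and small faces, and verify---using the exclusion of every configuration in $\mathcal F$---that the redistributed charge $\mu^*$ is nonnegative on every vertex and every face. Since discharging preserves the total, this contradicts the negative sum above.

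The main obstacle is calibrating the two steps together. Greedy extension fails for a $3$- or $4$-vertex adjacent to a major ($\Delta$-)vertex, because each edge incident to that major neighbour has only about two free colors while the edges around the small vertex are mutually adjacent and cannot always receive distinct colors from such reduced lists. One therefore needs a cleverer extension---for instance, coloring $v$ first and using the large freedom $M+2d-d(v)$ at $v$ to separate the resulting edge lists, or doing a local recoloring near a major neighbour to free an extra color. Making these finer reducibility statements strong enough to close the discharging, and doing so at the tight threshold $M\ge 5d+2$ when $\varepsilon\in\{1,2\}$ (the projective plane and the sphere), is where the real combinatorial work lies.
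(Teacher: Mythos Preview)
Your framework---minimum counterexample plus discharging with initial charge $d(x)-4$ on vertices and faces---is exactly what the paper uses, but your Step~1 is too vague to close the argument, and the concrete ingredients you do propose are not the ones that work. The paper does not build a catalogue of locally reducible configurations in the style you sketch. Instead it proves two global degree-sum lemmas: for \emph{every} edge $uv$, $d(u)+d(v)\ge M-2d+4$ (by deleting $e$ and recoloring), and if $\min\{d(u),d(v)\}\le\lfloor(M+2d-1)/(2d)\rfloor$ then $d(u)+d(v)\ge M+3$ (by deleting $e$, erasing the small endpoint, and recoloring). The second is strictly stronger than your sample configuration ``a $3$-vertex all of whose neighbours have degree $\le M-1$'': it forces \emph{every} neighbour of a $3$-vertex to have degree $M$. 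On top of this, the paper invokes the Borodin--Kostochka--Woodall bipartite list-edge lemma to exclude $k$-alternators, which yields a \emph{master} assignment: each $3$-vertex is matched to a distinct $M$-vertex (its $3$-master), and no $M$-vertex is the $3$-master of more than one $3$-vertex. This matching is what makes a one-unit transfer from $M$-vertices to $3$-vertices affordable; your proposal has no analogue of it, and the recoloring manoeuvres you describe in the last paragraph are precisely what the master machinery replaces.

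Your discharging direction is also off. In the paper, $4^+$-faces and $4$-vertices are inert; $3$-faces receive $\tfrac{k-4}{k}$ from each incident $k$-vertex with $5\le k\le 7$ and $\tfrac12$ from each incident $8^+$-vertex; and each $3$-vertex receives $1$ from its $3$-master. Nothing flows from large faces, and nothing flows \emph{to} $4$- or $5$-vertices. The face check then reduces to a short case analysis on the smallest degree on a triangle, using the degree-sum lemma $d(u)+d(v)\ge M-2d+4\ge 3d+6$ to force the other two vertices large enough; the vertex check uses only that an $M$-vertex loses at most $\tfrac{M}{2}+1$. Without the two degree-sum lemmas and the master assignment, neither side of your discharging can be made nonnegative at the threshold $M=5d+2$.
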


The interesting cases of Theorem 1.3 and Theorem 1.4 are when $M =
\Delta(G)$. Indeed, Theorem 1.3 and Theorem 1.4 are only a technical
strengthening of Theorem 1.1 and Theorem 1.2, respectively. But
without them we would get complications when considering a subgraph
$H\subset G$ such that $\Delta(H)<\Delta(G)$.

In Section 2, we prove some lemmas. In Section 3, we complete our
main proof with discharging method.

\section{Structural properties}
From now on, we will use without distinction the terms \emph{colors}
and \emph{labels}. Let $c$ be a partial list ($d$,1)-total labelling
of $G$. We denote by $A(x)$ the set of colors which are still available for coloring
element $x$ of $G$ with the partial list ($d$,1)-total labelling $c$. Let $G$ be a minimal
counterexample in terms of $|V(G)|+|E(G)|$ to Theorem 1.3 or Theorem
1.4.

\begin{lem}
$G$ is connected.
\end{lem}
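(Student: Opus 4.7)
The plan is to argue by contradiction using the minimality of $G$. Suppose $G$ has at least two connected components, and write $G = G_1 \cup G_2$, where $G_1$ is one connected component of $G$ and $G_2$ is the union of the remaining ones. Then both $G_1$ and $G_2$ are proper subgraphs, so $|V(G_i)|+|E(G_i)| < |V(G)|+|E(G)|$ for $i=1,2$.

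The next step is to verify that each $G_i$ satisfies the hypotheses of the theorem that $G$ violates. Since $G_i$ is a subgraph of $G$, it inherits an embedding in the same surface, hence the hypothesis on the Euler characteristic $\varepsilon$ is met. The parameter $M$ is unchanged, and clearly $\Delta(G_i)\leq \Delta(G)\leq M$. By the minimality of $G$, neither $G_1$ nor $G_2$ is a counterexample, so each is $(M+2d)$-$(d,1)$-total choosable.

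Given any list assignment $L$ on $V(G)\cup E(G)$ with $|L(x)|=M+2d$, I restrict $L$ to $V(G_i)\cup E(G_i)$ and apply list choosability on each component to obtain $L$-$(d,1)$-total labellings $c_1$ of $G_1$ and $c_2$ of $G_2$. Because no edge of $G$ joins $G_1$ to $G_2$, no element of $G_1$ is adjacent to any element of $G_2$ in the sense relevant to $(d,1)$-total labelling, so the union $c=c_1\cup c_2$ is automatically a proper $L$-$(d,1)$-total labelling of $G$. This contradicts $G$ being a counterexample.

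There is no real obstacle here; the only point worth checking is that the hypotheses of Theorems~1.3 and~1.4 are inherited by subgraphs (embedding and bound on $\Delta$), which is immediate, and that the combined labelling is valid, which holds because labels on different components cannot conflict under the $(d,1)$-total rules.
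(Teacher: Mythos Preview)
Your proof is correct and follows essentially the same approach as the paper: assume $G$ is disconnected, split into a component $G_1$ and its complement $G_2$, apply minimality to each, and combine the labellings. You simply spell out in more detail that the hypotheses on the surface and on $\Delta\leq M$ are inherited by subgraphs and that the union of the two labellings is valid, points the paper leaves implicit.
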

\begin{proof}
Suppose that $G$ is not connected. Without loss of generality, let
$G_1$ be one component of $G$ and $G_2=G\backslash G_1$. By the
minimality of $G$, $G_1$ and $G_2$ are both
$(M+2d)$-($d$,1)-total choosable which implies $G$ is
$(M+2d)$-($d$,1)-total choosable, a contradiction.
\end{proof}

\begin{lem}
For each $e=uv\in E(G)$, $d(u)+d(v)\geq M-2d+4$.
\end{lem}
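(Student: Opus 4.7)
The plan is to argue by a standard deletion-and-extension argument using the minimality of $G$. Suppose for contradiction that some edge $e=uv\in E(G)$ satisfies $d(u)+d(v)\leq M-2d+3$. Let $G'=G-e$. Since deleting an edge cannot raise any degree, $\Delta(G')\leq\Delta(G)\leq M$, and clearly $|V(G')|+|E(G')|<|V(G)|+|E(G)|$, while $G'$ still embeds in the same surface. By minimality of $G$, the graph $G'$ fails to be a counterexample to Theorem 1.3 (respectively Theorem 1.4), hence $G'$ is $(M+2d)$-$(d,1)$-total choosable.

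For any list assignment $L$ on $V(G)\cup E(G)$ with $|L(x)|=M+2d$, I would apply the choosability of $G'$ to the restriction of $L$ to obtain an $L$-$(d,1)$-total labelling $c$ of $G'$. Viewed as a partial labelling of $G$, the function $c$ colors every element except $e$; to reach a contradiction I will extend $c$ by assigning a color to $e$.

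The remaining step is to count the colors forbidden for $e$. Note that $c(u)$ and $c(v)$ are already determined. An admissible value $\alpha\in L(e)$ for $c(e)$ must differ from the colors of the $d(u)+d(v)-2$ edges adjacent to $e$, must avoid the $2d-1$ values in $\{c(u)-d+1,\ldots,c(u)+d-1\}$, and must avoid the $2d-1$ values in $\{c(v)-d+1,\ldots,c(v)+d-1\}$. Hence at most $d(u)+d(v)+4d-4$ colors are forbidden, and $|A(e)|\geq (M+2d)-(d(u)+d(v)+4d-4)=M-2d+4-d(u)-d(v)\geq 1$ under our assumption. A legal color can therefore be chosen for $e$, which contradicts the fact that $G$ is a counterexample.

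There is no serious obstacle beyond this counting: the argument is purely local, $c(u)$ and $c(v)$ are inherited from the labelling of $G'$, and the entire conclusion comes down to the single inequality $M+2d>d(u)+d(v)+4d-4$.
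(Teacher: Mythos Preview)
Your argument is correct and is essentially identical to the paper's own proof: delete $e$, invoke minimality to label $G-e$, then count that at most $(d(u)+d(v)-2)+2(2d-1)$ colors are forbidden for $e$, leaving at least one available. The only difference is that you spell out more explicitly why $G-e$ inherits the hypotheses and where each forbidden block comes from.
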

\begin{proof}
If for some $e=uv\in E(G)$, $d(u)+d(v)\leq M-2d+3$. By the
minimality of $G$, $G-e$ is $(M+2d)$-($d$,1)-total choosable.
We denote this coloring by $c$. Since
$|A(e)|\geq M+2d-(d(u)+d(v)-2)-2(2d-1)\geq M+2d-(M-2d+1)-2(2d-1)\geq1$
under the coloring $c$, we can extend $c$ to $G$, a contradiction.
\end{proof}

\begin{lem}
For any edge $e=uv\in E(G)$ with
$\min\{d(u),d(v)\}\leq\lf\DF{M+2d-1}{2d}\rf$, we have
$d(u)+d(v)\geq M+3$.
\end{lem}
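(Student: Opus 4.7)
The plan is to push the argument of Lemma 2.2 one step further, using the minimality of $G$. Suppose, for contradiction, that $e=uv\in E(G)$ satisfies $d(u)\leq d(v)$, $d(u)\leq\M$, but $d(u)+d(v)\leq M+2$. By minimality, $G-e$ admits a list $(d,1)$-total labelling $c$. Whereas Lemma 2.2 needed only $e$ to be free, I would also uncolor the vertex $u$, and then re-color the two elements in the order \emph{edge $e$, then vertex $u$}.

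First, with $u$ uncolored, the label of $e$ is only constrained by the $d(u)+d(v)-2$ edges adjacent to it and by the $2d-1$ labels within distance $d-1$ of $c(v)$. Counting these against the palette of size $M+2d+1$ leaves at least $M-d(u)-d(v)+4\geq 2$ admissible colors; here the assumption $d(u)+d(v)\leq M+2$ is exactly what is needed. Pick any such color for $e$.

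Next, I would show that $u$ can still be colored. The forbidden labels at $u$ are the colors of its $d(u)$ neighbors together with, for each of its $d(u)$ incident edges, the $2d-1$ labels within distance $d-1$ of that edge's color, for a crude total of $d(u)+(2d-1)d(u)=2d\cdot d(u)$ forbidden labels. The hypothesis $d(u)\leq\M$ is tailored precisely so that $2d\cdot d(u)\leq M+2d-1$, leaving $|A(u)|\geq 2$. Coloring $u$ then extends $c$ to a list $(d,1)$-total labelling of $G$ with $M+2d+1$ labels, contradicting the minimality of $G$.

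The only design choice is recognizing that freeing the endpoint $u$ in addition to $e$ is what buys the extra slack at $e$, and that the hypothesis on $d(u)$ is calibrated to make the second count work under the crudest available upper bound. I do not expect any genuine technical obstacle; the two counting inequalities are independent and each of them is forced by exactly one of the two hypotheses of the lemma.
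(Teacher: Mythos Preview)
Your argument is essentially identical to the paper's: delete $e$, uncolor $u$, then recolor $e$ first and $u$ second, with the same two counting estimates. One small slip: in this paper $(M+2d)$-$(d,1)$-total choosable means lists of size $M{+}2d$, not $M{+}2d{+}1$, so both counts should read $|A(e)|\geq 1$ and $|A(u)|\geq 1$ rather than $\geq 2$; this is harmless since one available color is all you need.
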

\begin{proof}
Suppose there is some $e=uv\in E(G)$ such that
$d(u)\leq\lf\DF{M+2d-1}{2d}\rf$ and $d(u)+d(v)\leq M+2$.
By the minimality of $G$,
$G-e$ is $(M+2d)$-($d$,1)-total choosable. Erase
the color of vertex $u$, and let $c$ be the partial
list ($d$,1)-total labelling with $|L|=M+2d$. Then
$|A(e)|\geq M+2d-(d(u)+d(v)-2)-(2d-1)\geq M+2d-M-(2d-1)\geq1$
which implies that $e$ can be properly colored. Next, for vertex
$u$, $|A(u)|\geq M+2d-(d(u)+(2d-1)d(u))\geq M+2d-(M+2d-1)\geq1$.
Thus we extend the coloring $c$ to $G$, a contradiction.
\end{proof}

\begin{lem}
{\upshape ([2])} A bipartite graph $G$ is edge $f$-choosable where
$f(uv)=\max\{d(u),d(v)\}$ for any $uv\in E(G)$.
\end{lem}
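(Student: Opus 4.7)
Lemma 2.4 is a known refinement of Galvin's theorem (due to Borodin, Kostochka, and Woodall; reference [2]), and the natural route is the kernel method of Bondy, Boppana, and Siegel applied to the line graph $H = L(G)$. An $L$-edge-coloring of $G$ is exactly a proper $L$-vertex-coloring of $H$, so it suffices to find an orientation $D$ of $H$ such that every induced sub-digraph of $D$ has a kernel and each vertex $e$ of $H$ has out-degree $d^+_D(e) \leq f(e) - 1 = \max\{d_G(u), d_G(v)\} - 1$, where $e = uv$.

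To construct $D$, I would first invoke K\"onig's edge-coloring theorem on the bipartite graph $G$ to obtain a proper $\Delta$-edge-coloring $\phi$ of $G$, with bipartition $V(G) = X \cup Y$. For adjacent $e, e' \in V(H)$ sharing the $G$-vertex $w$, I would orient $e \to e'$ whenever $w \in X$ and $\phi(e) > \phi(e')$, or $w \in Y$ and $\phi(e) < \phi(e')$. The kernel-perfectness of this Galvin orientation follows from a stable-matching / Gale--Shapley argument: for any induced sub-digraph, a kernel arises as the outcome of a matching where each $X$-side edge prefers partners with smaller $\phi$-value and each $Y$-side edge prefers partners with larger $\phi$-value, and Hall's condition guarantees that this process terminates at an independent dominating set.

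The principal obstacle is upgrading the out-degree bound from the classical Galvin value $\Delta - 1$ to the refined value $\max\{d(u), d(v)\} - 1$. One way to do this is to choose $\phi$ carefully so that the colors used at each low-degree vertex are ``aligned'' with its degree, for example by embedding $G$ in a $\Delta$-regular bipartite supergraph (via the standard completion using auxiliary vertices and edges) and then selecting an edge coloring of the supergraph in which, at each vertex $v$ of $G$, the $d_G(v)$ actual colors form a prescribed extremal subset of $\{1,\ldots,\Delta\}$; an alternative is to tilt the orientation asymmetrically so that from $e = uv$ all out-edges are deposited through the higher-degree endpoint, provided kernel-perfectness survives the change. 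Executing one of these strategies and simultaneously verifying the refined out-degree bound and the kernel-perfect property is the technical heart of the proof in [2]; once both are in place, the conclusion of the lemma is immediate from the kernel lemma.
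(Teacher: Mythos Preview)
The paper does not prove Lemma~2.4 at all; it is simply quoted from Borodin, Kostochka, and Woodall~[2] as a known result, with no argument supplied. Your sketch correctly identifies both the source and the kernel-method framework (Galvin orientation of the line graph, kernel-perfectness via stable matchings) that~[2] actually uses, and you are candid that upgrading the out-degree bound from Galvin's $\Delta-1$ to $\max\{d(u),d(v)\}-1$ is the nontrivial technical step, which you outline but do not execute in full. Since there is no in-paper proof to compare against, there is nothing further to contrast; your proposal is consistent with the approach of the cited reference.
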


A \emph{$k$-alternator} for some $k$ $(3\leq k\leq\M)$ is a
bipartite subgraph $B(X,Y)$ of graph $G$ such that
$d_B(x)=d_G(x)\leq k$ for each $x\in X$ and $d_B(y)\geq
d_G(y)+k-M-1$ for each $y\in Y$.

The concept of $k$-alternator was first introduced by Borodin,
Kostochka and Woodall \cite{2} and generalized by Wu and Wang
\cite{8}.

\begin{lem}
There is no $k$-alternator $B(X,Y)$ in $G$ for any integer $k$ with
$3\leq k\leq\M$.
\end{lem}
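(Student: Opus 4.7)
My plan is to argue by contradiction against the minimality of $G$: assume $B = B(X,Y)$ is a $k$-alternator with $3 \leq k \leq \M$, and build an explicit list $(d,1)$-total labelling of $G$ from any fixed list assignment $L$ of size $M + 2d$. First, I would delete the edge set $E(B)$; since $G - E(B)$ has strictly fewer edges than $G$, minimality yields an $L$-$(d,1)$-total labelling $c'$ of $G - E(B)$. Because each $x \in X$ satisfies $d_B(x) = d_G(x)$, $x$ is isolated in $G - E(B)$ and $c'(x)$ is unconstrained; I would then erase these labels, leaving a partial $(d,1)$-total labelling $c$ of $G$ that needs to be extended only on $E(B) \cup X$.

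The second step is to label the edges of $B$ using Galvin's theorem (Lemma 2.4). For $e = xy \in E(B)$ with $x \in X$ and $y \in Y$, the already-labelled elements adjacent to $e$ are the $d_G(y) - d_B(y)$ edges at $y$ lying outside $B$ together with the vertex $y$ itself, whose color forbids the $2d-1$ values within distance $d-1$ of $c(y)$; the erased vertex $x$ contributes nothing. Hence
\[
|A(e)| \geq (M+2d) - (d_G(y) - d_B(y)) - (2d-1) = M + 1 + d_B(y) - d_G(y).
\]
The alternator inequality $d_B(y) \geq d_G(y) + k - M - 1$ gives $|A(e)| \geq k \geq d_B(x)$, and $d_G(y) \leq \Delta(G) \leq M$ gives $|A(e)| \geq d_B(y)$, so $|A(e)| \geq \max\{d_B(x), d_B(y)\}$. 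Applying Lemma 2.4 to the bipartite graph $B$ then produces a proper list edge colouring of $B$ from $\{A(e) : e \in E(B)\}$ that extends $c$.

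The third step is to colour the vertices in $X$. Once all edges incident with $x \in X$ carry labels, the forbidden values for $c(x)$ are the at most $d_G(x)$ colors of its neighbors in $Y$ together with $(2d-1)\, d_G(x)$ values lying within distance $d-1$ of the incident edge labels, so $|A(x)| \geq M + 2d - 2d \cdot d_G(x)$. Since $d_G(x) \leq k \leq \M$, we have $2d \cdot d_G(x) \leq M + 2d - 1$, hence $|A(x)| \geq 1$. Picking any available color for each $x \in X$ completes a list $(d,1)$-total labelling of $G$, contradicting minimality.

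The main obstacle is arranging enough room for both extensions simultaneously. The edge step demands $|A(e)| \geq \max\{d_B(x), d_B(y)\}$, which is secured only because the alternator forces $d_G(y) - d_B(y) \leq M + 1 - k$; the vertex step demands $2d \cdot d_G(x) \leq M + 2d - 1$, which is precisely why the definition of a $k$-alternator caps $k$ at $\M$. Verifying the compatibility of these two inequalities in the stated range of $k$ is the actual content of the argument.
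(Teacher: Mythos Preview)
Your argument is correct and essentially identical to the paper's: both delete the edges of $B$ (the paper phrases this as passing to $G[V(G)\setminus X]$, which amounts to the same thing since $d_B(x)=d_G(x)$ forces $X$ to be independent), then use Lemma~2.4 with the bound $|A(e)|\geq\max\{d_B(x),d_B(y)\}$ derived from the alternator inequalities, and finally colour each $x\in X$ using $|A(x)|\geq M+2d-2d\,d_G(x)\geq 1$. The only difference is cosmetic in how the induction step is set up.
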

\begin{proof}
Suppose that there exits a $k$-alternator $B(X,Y)$ in $G$.
Obviously, $X$ is an independent set of vertices in graph $G$ by
Lemma 2.3. By the minimality of $G$, we can color all elements of
subgraph $G[V(G)\backslash X]$ from their lists of size $M+2d$.
We denote this partial
list ($d$,1)-total labelling by $c$. Then for each edge $e=xy\in B(X,Y)$,
$|A(e)|\geq M+2d-\left(d_G(y)-d_B(y)+(2d-1)\right)\geq M+2d-\left(M-d_B(y)+(2d-1)\right)\geq d_B(y)$ and
$|A(e)|\geq M+2d-\left(d_G(y)-d_B(y)+(2d-1)\right)\geq M+2d-(M+2d-k)\geq k$ because $B(X,Y)$ is
a $k$-alternator. Therefore,
$|A(e)|\geq\max\{d_B(y),d_B(x)\}$. By Lemma 2.4, it
follows that $E(B(X,Y))$ can be colored properly from their new
color lists. Next, for each vertex $x\in X$,
$|A(x)|\geq M+2d-\left(d(x)+(2d-1)d(x)\right)\geq M+2d-(M+2d-1)\geq1$
because $d_G(x)\leq k\leq\M$. Thus we extend the coloring $c$ to
$G$, a contradiction.
\end{proof}

\begin{lem}
Let $X_k=\{x\in V(G)\ \big|\ d_G(x)\leq k\}$ and $Y_k=\cup_{x\in
X_k}N(x)$ for any integer $k$ with $3\leq k\leq\M$. If
$X_k\neq\emptyset$, then there exists a bipartite subgraph $M_k$ of
$G$ with partite sets $X_k$ and $Y_k$ such that $d_{M_k}(x)=1$ for
each $x\in X_k$ and $d_{M_k}(y)\leq k-2$ for each $y\in Y_k$.
\end{lem}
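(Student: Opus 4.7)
The plan is to reduce the construction of $M_k$ to a Hall-type inequality and then, whenever that inequality fails, to manufacture a $k$-alternator so as to contradict Lemma 2.5. First, note that $X_k$ is independent in $G$: if $x,x'\in X_k$ were adjacent, Lemma 2.3 would give $2k\geq d(x)+d(x')\geq M+3$, which is impossible for $k\leq\M$ and $d\geq 2$. Consequently every edge of $G$ incident with $X_k$ lies in the bipartite graph $H$ on $(X_k,Y_k)$, and finding $M_k$ amounts to choosing, for each $x\in X_k$, one incident edge in $H$ so that no $y\in Y_k$ is chosen more than $k-2$ times. Splitting each $y\in Y_k$ into $k-2$ copies and applying Hall's theorem, this is possible if and only if $(k-2)|N(S)|\geq|S|$ for every $S\subseteq X_k$.

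Assume the contrary and pick a minimal $S\subseteq X_k$ with $(k-2)|N(S)|<|S|$; set $T=N(S)$. I claim $|N(y)\cap S|\geq k-1$ for every $y\in T$. Otherwise some $y_0\in T$ has $|N(y_0)\cap S|\leq k-2$; set $S'=S\setminus N(y_0)$, so that $N(S')\subseteq T\setminus\{y_0\}$ and $|S'|\geq|S|-(k-2)$. If $S'=\emptyset$ then $|S|\leq k-2\leq(k-2)|T|$, contradicting the choice of $S$. Otherwise minimality of $S$ gives $(k-2)|N(S')|\geq|S'|$, whence $(k-2)|T|\geq(k-2)(|N(S')|+1)\geq|S|$, again a contradiction.

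Now let $B$ be the bipartite subgraph of $G$ on $S\cup T$ containing every $G$-edge between $S$ and $T$. For $x\in S$ the independence of $X_k$ gives $N(x)\subseteq Y_k$, and since $N(x)\subseteq T$ we obtain $d_B(x)=d_G(x)\leq k$. For $y\in T$ the claim above yields $d_B(y)\geq k-1\geq d_G(y)+k-M-1$, using $d_G(y)\leq M$. Hence $B$ with partite sets $S$ and $T$ is a $k$-alternator in $G$ with $3\leq k\leq\M$, contradicting Lemma 2.5.

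The main obstacle is the minimality argument that upgrades the average Hall violation $(k-2)|T|<|S|$ into the pointwise bound $|N(y)\cap S|\geq k-1$; once that bound is in hand, both the independence of $X_k$ and the verification that $B$ is a $k$-alternator are immediate consequences of Lemma 2.3 and the crude bound $d_G(y)\leq M$.
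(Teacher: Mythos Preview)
Your argument is correct and is precisely the standard Hall-theorem reduction that the paper has in mind when it omits the proof and refers to Wu and Wang~\cite{8}: one shows $X_k$ is independent via Lemma~2.3, applies Hall's condition with capacities $k-2$ on the $Y_k$ side, and converts a minimal violating set into a $k$-alternator, contradicting Lemma~2.5. Your minimality step extracting the pointwise bound $|N(y)\cap S|\geq k-1$ is exactly what is needed to verify the alternator condition on the $Y$-side, and the rest is straightforward.
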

\begin{proof}
The proof is omitted here as it is similar with the proof of Lemma
2.4 by Wu and Wang in \cite{8}.
\end{proof}

We call $y$ the $k$-master of $x$ if $xy\in M_k$ and $x\in X_k, y\in
Y_k$. By Lemma 2.3, if $uv\in E(G)$ satisfies $d(v)\leq\M$ and
$d(u)=M-i$, then $d(v)\geq M+3-d(u)\geq i+3$. Together with Lemma
2.6, it follows that each $(M-i)$-vertex can be a $j$-master of at
most $j-2$ vertices, where $3\leq i+3\leq j\leq\M$. Each $i$-vertex
has a $j$-master by Lemma 2.6, where $3\leq i\leq j\leq\M$.

\section{Proof of main results}
By our Lemmas above, $G$ has structural properties in the following.
\begin{itemize}
\item[$(C1)$] $G$ is connected;
\item[$(C2)$] for each $e=uv\in E(G)$, $d(u)+d(v)\geq M-2d+4$;
\item[$(C3)$] if $e=uv\in E(G)$ and $\min\{d(u),d(v)\}\leq\lf\DF{M+2d-1}{2d}\rf$,
then $d(u)+d(v)\geq M+3$.
\item[$(C4)$] each $i$-vertex (if exists) has one $j$-master, where $3\leq i\leq j\leq\M$;
\item[$(C5)$] each $(M-i)$-vertex (if exists) can be a $j$-master of at most $j-2$ vertices, where $3\leq i+3\leq
j\leq\M$.
\end{itemize}

\noindent\textit{\textbf{Proof of Theorem 1.3}} Let $G$ be a minimal
counterexample in terms of $|V(G)|+|E(G)|$ to Theorem 1.3. In this
theorem,
$M\geq\DF{d}{2d-1}\left(10d-8+\sqrt{(10d-2)^2-24(2d-1)\varepsilon}\right)+1\geq10d+1$.
Thus $\lf\DF{M+2d-1}{2d}\rf\geq6$. In the following, we apply the discharging method to complete the
proof by a contradiction. At the very beginning, we assign an
initial charge $w(x)=d(x)-6$ for any $x\in V(G)$. By Euler's formula
$|V|-|E|+|F|=\varepsilon$, we have $\sum\limits_{x\in
V}w(x)=\sum\limits_{x\in V}(d(x)-6)=-6\varepsilon-\sum\limits_{x\in
F}(2d(x)-6)\leq -6\varepsilon$.

The discharging rule is as follows.

\textit{(R1)} each $i$-vertex (if exists) receives charge 1 from
each of its $j$-master, where $3\leq i\leq j\leq5$.

If $M\geq\Delta+3$, then $\delta(G)\geq6$. Otherwise, let $uv\in E(G)$ and $d(u)\leq5$. Then $d(u)+d(v)\leq
M-3+5\leq M+2$ and $d(u)\leq\lf\DF{M+2d-1}{2d}\rf$ as
$\lf\DF{M+2d-1}{2d}\rf\geq6$, which is a  contradiction to (C3). This obviously
contradicts the fact $\delta(G)\geq5$ for any planar graph. Proof of
the theorem is completed. Next, we only consider the case
$\Delta\leq M\leq\Delta+2$.

\noindent{\bf\textit{Claim 1.}} $\delta\geq M-\Delta+3$.
\begin{proof}
If there is some $e=uv\in E(G)$ such that $d(v)\leq M-\Delta+2$,
then $d(u)+d(v)\leq\Delta+(M-\Delta+2)\leq M+2$ and
$d(v)\leq5\leq\lf\DF{M+2d-1}{2d}\rf$ as
$\lf\DF{M+2d-1}{2d}\rf\geq6$, a contradiction to (C3).
\end{proof}

Let $v$ be a $k$-vertex of $G$.

$(a)$\quad If $3\leq k\leq5$, then $w'(v)=w(v)+\sum\limits_{k\leq
i\leq5}1=(k-6)+(6-k)=0$ by (C4) and rule (R1);

$(b)$\quad If $6\leq k\leq M-3$, then for all $u\in N(v)$,
$d(u)\geq6$ by (C3). Therefore, $v$ neither receives nor gives any charge by our rule,
which implies that $w'(v)=w(v)=k-6\geq0$;

$(c)$\quad If $M-2\leq k\leq \Delta$.

{\bf Case 1.}\  $M=\Delta+2$. Then $\delta\geq5$ by Claim 1.
For $k=\Delta$,\  $w'(v)\geq w(v)-3=\Delta-9=M-11$ by (C5) and (R1).

{\bf Case 2.}\  $M=\Delta+1$. Then $\delta\geq4$ by Claim 1. For
$k=\Delta-1$,\ $w'(v)\geq w(v)-3=\Delta-1-6-3=M-11$ by (C5) and rule
(R1). For $k=\Delta$,\ $w'(v)\geq w(v)-3-2=\Delta-6-3-2=M-12$ by
(C5) and rule (R1).

{\bf Case 3.}\  $M=\Delta$. Then $\delta(G)\geq3$ by Claim 1. For
$k=\Delta-2$,\ $w'(v)\geq w(v)-3=\Delta-2-6-3=M-11$ by (C5) and rule
(R1). For $k=\Delta-1$,\ $w'(v)\geq w(v)-3-2=\Delta-1-6-3-2=M-12$ by
(C5) and rule (R1). For $k=\Delta$,\ $w'(v)\geq
w(v)-3-2-1=\Delta-6-3-2-1=M-12$ by (C5) and rule (R1).

For all cases above, $w'(v)\geq M-12>0$ for any $d(v)\geq\Delta-2$ as
$M\geq10d+1\geq21$.

Let $X=\{x\in V(G)\big|d_G(x)\leq\lf\DF{M+2d-1}{2d}\rf\}$. By (C3),
$X$ is an independent set of vertices.

\noindent{\bf\textit{Claim 2.}} The number of
$\left(\lf\DF{M+2d-1}{2d}\rf+1\right)^+$-vertex of $G$ is at least
$M-\lf\DF{M+2d-1}{2d}\rf+3$. That is, $|V(G\backslash X)|\geq
M-\lf\DF{M+2d-1}{2d}\rf+3$.
\begin{proof}
Otherwise, let $Y=N_{x\in X}(x)$ and $B=B(X,Y)$ be the induced
bipartite subgraph. For all $y\in Y$, $d_{G\backslash X}(y)\leq
|Y|-1\leq M-\lf\DF{M+2d-1}{2d}\rf+1$. Therefore,
$d_{B}(y)=d_{G}(y)-d_{G\backslash X}(y)\geq
d_{G}(y)+\lf\DF{M+2d-1}{2d}\rf-M-1$, which implies $B$ is a
$\lf\DF{M+2d-1}{2d}\rf$-alternator of $G$, a contradiction to Lemma
2.5.
\end{proof}

Since $M\geq10d+1$, it follows that $M-12>\lf\DF{M+2d-1}{2d}\rf-5$.
Thus, $w'(v)\geq \lf\DF{M+2d-1}{2d}\rf-5$ when $d_G(v)\geq
\lf\DF{M+2d-1}{2d}\rf+1$. Then $\sum\limits_{x\in
V}w(x)=\sum\limits_{x\in
V}w'(x)>(M-\lf\DF{M+2d-1}{2d}\rf+3)(\lf\DF{M+2d-1}{2d}\rf-5)\geq(2d-1)\left(\DF{M-1}{2d}\right)^2-(10d-8)\DF{M-1}{2d}-15\geq-6\varepsilon$
as $M\geq\DF{d}{2d-1}\left(10d-8+\sqrt{(10d-2)^2-24(2d-1)\varepsilon}\right)+1$.
Then this contradiction completes the proof.\qed\\

\noindent\textit{\textbf{Proof of Theorem 1.4}}
Let $G$ be a minimal
counterexample in terms of $|V(G)|+|E(G)|$ to Theorem 1.4. In this
theorem, $M\geq 5d+2$. We define the initial charge function
$w(x):=d(x)-4$ for all element $x\in V\cup F$. By
Euler's formula $|V|-|E|+|F|=\varepsilon$, we have
$\sum\limits_{x\in V\cup F}w(x)=\sum\limits_{v\in V}(d(v)-4)+\sum\limits_{f\in
F}(d(f)-4)=-4\varepsilon<0$.

The transition rules are defined as follows.

\textit{(R1)}\quad Each 3-vertex (if exists) receives charge 1 from
its $3$-master.

\textit{(R2)}\quad Each $k$-vertex with $5\leq k\leq7$ transfer
charge $\DF{k-4}{k}$ to each 3-face that incident with it.

\textit{(R3)}\quad Each $8^+$-vertex transfer charge $\DF{1}{2}$ to
each 3-face that incident with it.

Analogous with Claim 1 in the proof of Theorem 1.3, it is easy to prove that
$\delta(G)\geq3$ when $\Delta=M$ and $\delta(G)\geq4$ otherwise. Let $v$ be a $k$-vertex of $G$.

For $k=3$, then $w'(v)= w(v)+1=3-4+1=0$ since it receives 1 from its
3-master;

For $k=4$, then $w'(v)= w(v)=0$ since we never change the charge by
our rules;

For $5\leq k\leq7$, then $w'(v)\geq w(v)-k\DF{k-4}{k}=0$ by (R2);

For $8\leq k\leq M-1$, then $w'(v)\geq w(v)-k\DF{1}{2}\geq0$ by
(R3);

If $M>\Delta$, then $M-1\geq\Delta$. Thus $w(v)\geq0$ for all $v\in
V(G)$. Otherwise, $\Delta=M$. Then for $k=\Delta$, $w'(v)\geq
w(v)-\DF{1}{2}M-1=\DF{M}{2}-5$ by (C5) and rules (R1), (R3). Since
$M\geq5d+2\geq12$, we have $w'(v)\geq\DF{M}{2}-5>0$.

Let $f$ be a $k$-face of $G$.

If $k\geq4$, then $w'(f)=w(f)\geq0$ since we never change the charge of them by our rules;

If $k=3$, assume that $f=[v_1,v_2,v_3]$ with $d(v_1)\leq d(v_2)\leq
d(v_3)$. It is easy to see $w(f)=-1$. Consider the subcases as
follows.

$(a)$\ Suppose $d(v_1)=3$. Then $M=\Delta$ and
$d(v_2)=d(v_3)=\Delta$ by (C3). Thus,
$w'(f)=w(f)+\DF{1}{2}\times2=0$ by (R3);

$(b)$\ Suppose $d(v_1)=4$. Then $d(v_3)\geq d(v_2)\geq
M-2d+4-d(v_1)\geq3d+2\geq8$ by (C2). Therefore,
$w'(f)=w(f)+\DF{1}{2}\times2=0$ by (R3);

$(c)$\ Suppose $d(v_1)=5$. Then $d(v_3)\geq d(v_2)\geq
M-2d+4-d(v_1)\geq3d+1\geq7$ by (C2). Therefore,
$w'(f)=w(f)+\DF{3}{7}\times2+\DF{1}{5}>0$ by (R2).

$(d)$\ Suppose $d(v_1)=m\geq6$. Then $d(v_3)\geq d(v_2)\geq6$. Therefore, $w'(f)\geq
w(f)+3\times\min\{\DF{m-4}{m},\DF{1}{2}\}=0$ by (R2) and (R3).

Thus, we have $\sum\limits_{x\in V\cup F}w'(x)\geq0$ which is a
contradiction with $\sum\limits_{x\in V\cup
F}w'(x)=\sum\limits_{x\in V\cup F}w(x)<0$.\qed


\end{document}